\documentclass{article}
\usepackage[utf8]{inputenc}
\usepackage{amsmath}
\usepackage{graphicx}
\usepackage{multirow}
\usepackage[dvipsnames]{xcolor}
\usepackage{amssymb}
\usepackage{float}
\usepackage{tikz}
\usepackage{amsthm}
\usepackage{mathtools}
\usepackage{hyperref}
\usepackage[parfill]{parskip}
\makeatletter
\def\thm@space@setup{%
  \thm@preskip=\parskip \thm@postskip=0pt
}
\makeatother
\usepackage{enumerate}
\usepackage{etaremune}
\usepackage{comment}
\usepackage{ytableau}
\ytableausetup{aligntableaux = bottom}

\newtheorem{lemma}{Lemma}[section]
\newtheorem{defn}[lemma]{Definition}
\newtheorem{cor}[lemma]{Corollary}
\newtheorem{prop}[lemma]{Proposition}
\newtheorem*{eg}{Example}

\newtheorem*{lemmaunnumbered}{Lemma}

\DeclareUnicodeCharacter{0308}{}
\DeclareUnicodeCharacter{0306}{}

\title{Principal ideals in a plactic monoid always intersect}
\author{Daniel Turaev}
\date{}

\begin{document}

\maketitle

\begin{abstract}
This note presents a proof that two principal ideals in a plactic monoid always intersect. Namely, this means that the plactic monoids are both left and right reversible. To the author's knowledge, this result has not yet appeared in the literature studying this monoid. This result holds for both finite rank plactic monoids and the infinite rank plactic monoid.  
\end{abstract}

In any monoid $M$, one can define a right ideal generated by a set $A\subset M$ as the set $$AM = \{am\ |\ a \in A, m \in M\} \subseteq M,$$ and analogously a left ideal generated by $A$ as the set $$MA = \{ma\ |\ a \in A, m \in M\} \subseteq M.$$

If $A$ consists of a single element $s \in M$, then we call these left and right ideals \emph{principal}. We abuse notation and write $$sM = \{sm\ |\ m\in M\}$$ and $$Ms = \{ms\ |\ m\in M\}$$ for the principal right and principal left ideals generated by $s$ respectively.

We call a monoid \emph{right reversible} if any two left ideals intersect. That is,$$ \forall A, B\subset M:\ MA\cap MB \neq \emptyset.$$

Analogously, call a monoid \emph{left reversible} if any two right ideals intersect. This definition was first introduced by Dubreil \cite{dubreil}, and was used to study which semigroups are embeddable in groups \cite[Chapter 1.10]{MR132791}, but is also an interesting property in its own right. 

The following result is well known and immediate.
\begin{lemmaunnumbered}
        For any monoid $M$, if every pair of principal left (respectively right) ideals intersect, then every pair of left (respectively right) ideals intersect.
\end{lemmaunnumbered}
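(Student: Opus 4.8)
The plan is to reduce the statement about arbitrary left ideals to the hypothesis about principal ones by exploiting the obvious containment of a principal ideal inside the ideal generated by any set containing its generator. I will prove the left-ideal version explicitly; the right-ideal version then follows by a completely symmetric argument (or by passing to the opposite monoid).

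First I would fix two left ideals $MA$ and $MB$ with $A, B \subseteq M$, and assume both are nonempty (if a generating set is empty the corresponding ideal is empty and no two ideals can be required to meet, so as usual one restricts attention to nonempty ideals). I then select arbitrary elements $a \in A$ and $b \in B$. The key observation, which I would record as the one substantive step, is the containment $Ma \subseteq MA$ and $Mb \subseteq MB$: every element of $Ma$ has the form $ma$ for some $m \in M$, and since $a \in A$ this already displays it as a member of $MA = \{ma' \mid a' \in A,\ m \in M\}$, and likewise for $b$.

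Next I would apply the hypothesis to the two \emph{principal} left ideals $Ma$ and $Mb$. By assumption they intersect, so there exist $m_1, m_2 \in M$ with $m_1 a = m_2 b$. This common element lies in $Ma \subseteq MA$ and simultaneously in $Mb \subseteq MB$, whence $MA \cap MB \neq \emptyset$, as required.

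I do not anticipate any genuine obstacle: the whole content is the monotonicity of the ideal construction under inclusion of generators, after which the argument is purely set-theoretic. The only point meriting a moment's attention is the degenerate case of empty generating sets, which I would dispose of at the outset by speaking only of nonempty ideals, consistent with the convention implicit in the phrase ``any two ideals intersect.''
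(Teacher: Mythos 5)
Your proof is correct and is essentially the paper's own argument: the paper dispatches the lemma with the single observation that every left (respectively right) ideal contains a principal one, which is exactly your containment $Ma \subseteq MA$, $Mb \subseteq MB$ followed by applying the hypothesis to $Ma$ and $Mb$. Your aside about empty generating sets is a reasonable bookkeeping convention the paper leaves implicit, and nothing more needs to be said.
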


That is, we can equivalently define reversibility in terms of intersections of principal ideals, rather than general ideals. This is because every left (respectively right) ideal contains a principal left (respectively right) ideal.

The intersection of two principal ideals can be rephrased as finding a solution to one of the following simple word equations:
\begin{align}
    uX_1 = vY_1\label{rightideal}\\
    X_2u = Y_2v\label{leftideal}
\end{align}
respectively corresponding to the intersection of two principal right ideals and two principal left ideals. Here $u$ and $v$ are elements of a given monoid $M$, and we seek $X_i, Y_i \in M$ solving these equations.

We claim that in the plactic monoids, a family of monoids with origins in works of Schensted \cite{schensted61} and Knuth \cite{knuth1970permutations}, and later studied in detail by Lascoux and Sch\"utzenberger \cite{plaxique81}, both of the above equations will have a solution irrespective of the choice of $u$ and $v$. Thus, any two principal ideals in the plactic monoid intersect, and so they are both left and right reversible.

\section{The plactic monoids}

The quickest definition of the plactic monoids is the following.

\begin{defn}[The plactic monoids]
    Fix $n\in\mathbb{N}$. The plactic monoid of rank $n$, written $P_n$, is the monoid defined by the presentation $$P_n\cong\langle A\ |\ K_n\rangle,$$ where $A$ is the totally ordered alphabet $\{1,\dots,n\}$ and $K_n$ is the set of Knuth relations $$ K_n = \{xzy = zxy\ |\ x,y,z\in A,\  x\leq y <z \}\cup\{ yxz = yzx\ |\ x,y,z\in A, \ x<y\leq z\}.$$

    The plactic monoid of infinite rank is the monoid generated by $\mathbb{N}$ subject to the union of all Knuth relations $$P_\mathbb{N} = \langle \mathbb{N}\ |\ \bigcup_{n\in\mathbb{N}}K_n\rangle.$$
\end{defn}

Where it is clear, we will abuse notation and write $K$ for the Knuth relations, so called because they were first introduced in \cite{knuth1970permutations}. We will also write $A$ for the totally ordered alphabet $\{1,\dots,n\}$, for an arbitrary choice of $n\in\mathbb{N}$. 

Notice that if two words in $A^*$ are related via Knuth relations, they must have the same letters from $A$, though potentially arranged in different orders. There is therefore a well defined notion of content in the plactic monoids.
\begin{defn}[Content]
    Given an element $w$ of $P_n$, the content of $w$, written $c(w)$, is the tuple in $\mathbb{N}^n$ encoding how many times each letter of $A$ appears in $w$.\footnote{Equivalently, the content of an element is its Parikh image.} 
\end{defn}

While the above definition is concise, it gives no insight into the normal forms of the elements in $P_n$, nor how to correctly conceptualise the multiplication. The plactic monoids were originally defined as an insertion algorithm on Semistandard Young Tableau. We will give this description of them here, as it will allow us to talk about normal forms.

We follow the French conventions of Young diagrams having longer rows underneath shorter ones.

\begin{defn}
 A \emph{Semistandard Young Tableau} (henceforth simply tableau) is a Young diagram with labelled boxes, with labels satisfying the following conditions:
    \begin{itemize}
        \item Each row weakly increases left to right.
        \item Each column strongly decreases top to bottom.
    \end{itemize}
\end{defn}

\begin{eg} \begin{ytableau}
     4&5\\ 2& 4\\ 1&2&3
    \end{ytableau} is a tableau.
\end{eg}

To each tableau whose labels are in $A$, one can associate a \emph{row reading} and \emph{column reading}, both of which are words in $A^*$.

\begin{defn}[Row and Column readings]
    Let $t$ be a tableau with labels taken from $A$. Suppose $t$ has $m$ rows, labelled top to bottom as $r_1,\dots, r_m$. The labels of the boxes in each row are an increasing sequence, which can be viewed as a word in $A^*$, which we will also call $r_i$. The row reading of $t$ is then $w = r_1r_2\dots r_m \in A^*$. 

    Similarly, denote the columns of $t$ from left to right by $c_1,\dots, c_p$. Each such column corresponds to a strictly decreasing sequence $c_i\in A^*$. The column reading of  $t$ is then $w = c_1\dots c_p \in A^*$. 
\end{defn}

\begin{eg}
The tableau $t = $ \begin{ytableau}
        3& 4 \\2 & 3\\1 & 1 & 2 & 2
    \end{ytableau}  has row reading $$34231122 = 34\ 23\ 1122$$ and column reading $$32143122 = 321\ 431\ 2\ 2.$$
\end{eg}

One can show that the row and column forms of a tableau are Knuth equivalent to one another. Furthermore, each $K$-class in $\langle A\ | \ K\rangle$ contains exactly one tableau's row form, and that same tableau's column form. That is to say the following.

\begin{lemma}
    The row forms of tableaux with labels in $A$ are normal forms for the plactic monoid over $A$. Likewise, the column forms of tableaux with labels in $A$ are normal forms for the plactic monoid over $A$.   
\end{lemma}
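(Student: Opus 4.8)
The plan is to prove this via Schensted's row-insertion algorithm, which assigns to each word $w \in A^*$ a canonical tableau $P(w)$, obtained by inserting the letters of $w$ one at a time into the empty tableau. Recall the single-letter insertion $t \leftarrow a$: the letter $a$ is placed in the bottom row, either appended at the right end if it is weakly larger than every entry there, or otherwise it displaces (``bumps'') the leftmost entry strictly greater than $a$, and that bumped entry is then inserted into the row above by the same rule, continuing upward until some letter is appended (possibly starting a new, shorter row). The first routine step is to verify that $t \leftarrow a$ is again a tableau, which amounts to checking that the bumping preserves the weakly-increasing-rows and strictly-decreasing-columns conditions; this is a direct inspection of the bumping rule.

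I would then establish the two facts that make $P$ a well-defined system of normal forms. The first is that $w \equiv_K \mathrm{row}(P(w))$ for every word $w$. By induction on word length, this reduces to the elementary claim that $\mathrm{row}(t \leftarrow a) \equiv_K \mathrm{row}(t)\,a$ for a tableau $t$ and a letter $a$; and this in turn reduces, row by row up the bumping path, to a single-row statement: if inserting $a$ into a weakly increasing row $r$ bumps the entry $b$, then $r\,a$ is Knuth-equivalent to (the new row)$\cdot b$. This single-row equivalence is proved by repeatedly applying the two families of Knuth relations to slide $a$ into position and push $b$ to the end. The second fact is the reconstruction identity $P(\mathrm{row}(t)) = t$, which says that inserting the rows of a tableau back in (in row-reading order) rebuilds it; this follows because each row, being weakly increasing and sitting weakly below the one above it, is appended without triggering any bump.

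The heart of the argument, and the step I expect to be the main obstacle, is Knuth-invariance of the insertion tableau: if $w \equiv_K w'$ then $P(w) = P(w')$. It suffices to treat a single elementary Knuth move applied inside a word, and since insertion processes letters left to right, one may absorb the common prefix into a starting tableau $t$ and discard the common suffix, reducing to showing that the two Knuth-related length-three words produce the same tableau when inserted into an arbitrary tableau $t$. Verifying this requires a careful case analysis tracking how the three letters, under each of the relations $xzy \equiv zxy$ (with $x \le y < z$) and $yxz \equiv yzx$ (with $x < y \le z$), travel along their bumping paths through the affected rows of $t$; the cases are organised by where the letters land relative to one another. This is the genuinely technical part, and is where all the combinatorial content lives.

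Finally, these facts assemble into the result. \emph{Existence:} for any word $w$, the tableau $P(w)$ has row reading $\mathrm{row}(P(w)) \equiv_K w$, so every Knuth class contains a row reading. \emph{Uniqueness:} if two tableaux $t, t'$ satisfy $\mathrm{row}(t) \equiv_K \mathrm{row}(t')$, then by invariance $t = P(\mathrm{row}(t)) = P(\mathrm{row}(t')) = t'$, so the row reading in each class is unique; hence row readings are normal forms. For the column readings I would first show $\mathrm{col}(t) \equiv_K \mathrm{row}(t)$ for every tableau $t$ — most cleanly by checking $P(\mathrm{col}(t)) = t$ and invoking $w \equiv_K \mathrm{row}(P(w))$ — so that each class also contains exactly one column reading (existence from $\mathrm{col}(t) \equiv_K \mathrm{row}(t)$, and uniqueness because $\mathrm{col}(t) \equiv_K \mathrm{col}(t')$ forces $\mathrm{row}(t) \equiv_K \mathrm{row}(t')$ and hence $t = t'$). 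This yields the column-reading half of the statement.
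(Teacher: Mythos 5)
Your outline is the standard Schensted argument, and its overall architecture is right --- indeed the paper gives no proof at all of this lemma, deferring to Chapter 5 of Lothaire, and what you sketch is essentially that textbook proof (existence via $w \equiv_K \mathrm{row}(P(w))$, uniqueness via Knuth-invariance of $P$ plus reconstruction, with the one-move case analysis correctly identified as the technical heart). However, two of your intermediate steps are wrong as stated, and one of them would make the induction fail. The single-row bumping lemma has the wrong direction: with the paper's row reading $r_1 r_2 \cdots r_m$ (top row first, French convention), if inserting $a$ into the row $r$ bumps $b$ and produces the new row $r'$, the correct equivalence is $r\,a \equiv_K b\,r'$, with the bumped letter at the \emph{front} --- this is exactly what allows $b$ to be absorbed into the insertion into the rows above, which \emph{precede} $r$ in the row reading. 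Your version, $r\,a \equiv_K r'\,b$ (``push $b$ to the end''), is false: take $r = 12$ and $a = 1$; then $r\,a = 121 \equiv_K 211 = b\,r'$ by the relation $xzy \equiv zxy$ with $x = y = 1 < z = 2$, whereas $r'\,b = 112$ admits no application of either Knuth relation at all, so its Knuth class is the singleton $\{112\}$ and $121 \not\equiv_K 112$. With $b$ pushed to the end, the row-by-row chaining up the bumping path cannot be assembled.

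The second error is your justification of the reconstruction identity $P(\mathrm{row}(t)) = t$. In row-reading order the rows are inserted starting from the top row (shortest, largest entries) downward, and every row after the first \emph{does} trigger bumping: for $t$ with rows $2$ over $11$, after inserting $r_1 = 2$ the first letter of $r_2 = 11$ bumps the $2$ upward. There is in fact no ordering of the rows for which they append bump-free (bottom-to-top also bumps, e.g.\ rows $2$ over $13$), so the identity, while true, needs the genuine inductive argument --- inserting the bottom row $r_m$ into the tableau built from $r_1 \cdots r_{m-1}$ restores $t$, tracked letter by letter using the fact that successive insertions of a weakly increasing word land strictly further right. (Your no-bump picture is correct only for the \emph{column} reading question in a different sense, and your plan for columns via $P(\mathrm{col}(t)) = t$ is fine, though that identity too involves bumping within each column.) Finally, be aware that the step you defer --- invariance of $P$ under a single Knuth move --- is where essentially all the work lies in the source the paper cites; deferring it is legitimate in a plan, but the two local errors above sit precisely in the reductions you claimed were elementary.
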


See Chapter 5 of \cite{Loth} for a proof of this lemma, as well as a more detailed discussion of the above. 

For our purposes, the column form will prove a more natural normal form to work with. This normal form has some nice computational properties, which were developed in \cite{CainGray15}. We won't need the details from this paper, but it will be useful to refer to $P_n$ as being generated by $\mathcal{C}_n$, the set of columns (i.e. strictly decreasing words) in $A$. Note that when $A$ is finite, so is $\mathcal{C}_n$.

We now describe Schensted's original insertion algorithm from \cite{schensted61}.

\begin{defn}[Schensted's algorithm]
    Define $P:A^*\to A^*$ to be the map sending a word $w$ to the row reading of a tableau recursively as follows:

Firstly, $P(\varepsilon) = \varepsilon$. Then suppose $w = x_1\dots x_\ell\in A^*$ and $P(x_1\dots x_{\ell-1}) = r_1\dots r_m$, for some rows $r_i$ that form the row reading of a tableau. Then we have:\begin{enumerate}
    \item If $r_mx_\ell$ is a row, then we set $P(r_1\dots r_mx_\ell) = r_1\dots r_mx_\ell$
    \item If not, then we can write $r_m = r_\alpha y r_\beta$, with $y$ being the leftmost letter such that $x_\ell<y$. Such a $y$ must exist, since otherwise $r_mx_\ell$ would be a row. But then $r_\alpha x_\ell r_\beta$ will be a row. So we set $$P(r_1\dots r_mx_\ell) = P(r_1\dots r_{m-1}y) r_\alpha x_\ell r_\beta.$$ 
\end{enumerate}\end{defn}
We call the process in point (2) `bumping the letter $y$'. 

While the statement seems complex, it describes a fairly simple visual process of moving numbers around a tableau. Furthermore, one can equivalently define the plactic monoid as being the monoid generated by $A$ with multiplication defined by Schensted insertions. In this framework, it makes sense that the normal forms `should' be the row forms of tableaux. We again refer the reader to \cite{Loth}, or \cite{knuth1970permutations}, for a proof that this is equivalent to the monoid defined by Knuth relations.

\section{Involution monoids}

A monoid is called an \emph{involution monoid} if it admits an anti-automorphism that is an involution. That is, if a monoid $M$ has a map $\theta: M \to M$ such that $$\forall x, y: \theta(xy) = \theta(y)\theta(x)$$ and $$\forall x: \theta(\theta(x)) = x.$$

\begin{lemma}
    Let $M$ be an involution monoid. Then $M$ is left reversible if and only if it is right reversible. 
\end{lemma}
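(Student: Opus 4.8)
The plan is to work directly with the word-equation reformulation established earlier: by the unnumbered lemma, $M$ is left reversible precisely when equation~\eqref{rightideal}, namely $uX_1 = vY_1$, admits a solution $X_1,Y_1\in M$ for every choice of $u,v$, and $M$ is right reversible precisely when equation~\eqref{leftideal}, namely $X_2u = Y_2v$, admits a solution for every $u,v$. The anti-automorphism $\theta$ is the bridge between these two equations, since applying an anti-homomorphism reverses the order of a product and thereby converts a left-multiplication equation into a right-multiplication one.

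The key observation I would record first is that $\theta$ is a bijection: since $\theta(\theta(x)) = x$, the map $\theta$ is its own two-sided inverse, hence both injective and surjective. This surjectivity is what lets me reach an \emph{arbitrary} pair $u,v$ when transporting a solution, and is the one point worth stating carefully rather than the main difficulty of the argument.

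Assuming $M$ is left reversible, I would prove it is right reversible as follows. Given arbitrary $u,v\in M$, apply left reversibility to the pair $\theta(u),\theta(v)$ to obtain $X_1,Y_1$ with $\theta(u)X_1 = \theta(v)Y_1$. Applying $\theta$ to both sides and using the anti-homomorphism property together with $\theta(\theta(u)) = u$ and $\theta(\theta(v)) = v$ yields $\theta(X_1)\,u = \theta(Y_1)\,v$. Setting $X_2 = \theta(X_1)$ and $Y_2 = \theta(Y_1)$ then solves equation~\eqref{leftideal} for the given $u,v$, establishing right reversibility.

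Finally, the converse direction is entirely symmetric: because $\theta$ equals its own inverse, the same manipulation carries a solution of \eqref{leftideal} to a solution of \eqref{rightideal}, so right reversibility implies left reversibility by the identical computation with the roles of the two equations exchanged. I do not anticipate a genuine obstacle here; the only care required is to feed $\theta(u),\theta(v)$ rather than $u,v$ into the reversibility hypothesis so that, after applying $\theta$, the equation is phrased in terms of the originally prescribed $u,v$.
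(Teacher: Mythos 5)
Your proposal is correct and matches the paper's own argument: both apply the left-reversibility hypothesis to the pair $\theta(u),\theta(v)$ and then apply the anti-automorphism $\theta$, using $\theta(\theta(x))=x$, to transport the solution of $uX_1=vY_1$ into a solution of $X_2u=Y_2v$. Your explicit remarks that $\theta$ is a bijection and that the converse follows by the symmetric computation are fine touches the paper leaves implicit, but they do not change the route.
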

\begin{proof}

Suppose $M$ is left reversible, so Equation \ref{rightideal} holds for every pair $u, v \in M$. So we know that for some $X, Y \in M$ we have that $$\theta(u)X = \theta(v)Y,$$

since $\theta(u),\theta(v)$ are both elements of $M$. 

Now, applying $\theta$ we get that $$\theta(u)X = \theta(v)Y \implies \theta(X)\theta(\theta(u)) = \theta(Y)\theta(\theta(v)),$$ but since $\theta$ is an involution, this solves Equation \ref{leftideal} for the pair $u, v \in M$. 
\end{proof}

We note here that the plactic monoids of finite rank are involution monoids. This was first shown by Sch\"utzenberger in \cite{Schutzenberger_1963} and is known as the Sch\"utzenberger involution. This involution, denoted by $\theta$, is defined firstly as a function on the free monoid $\theta:A^*\to A^*$, acting by reversing a word and then sending each $k \in A$ to $n-k+1 \in A$. 

For example, given $A = \{1,2,3,4\},$ and $w =  32341,$ we will get an output $ \theta(w) = 41232$. As another example, $\theta(432123) = 234321$. 

It is straightforward to see that $\theta$ is an anti-automorphism on $A^*$. That is, for all $x, y \in A^*$ we have the identity $\theta(xy) = \theta(y)\theta(x).$ This map is also clearly self inverse. 

What's more, by the structure of the Knuth relations, $\theta$ is a well defined anti-automorphism on $P_n$ too. Thus, we will denote by $\theta$ the anti-automorphism from $P_n$ to itself acting by reversing the normal form as a word in $A^*$, then replacing the letters, then applying Schensted's algorithm to return to a normal form. 

\begin{cor}\label{cor:leftiffright}
    For any $n\in\mathbb{N}$, the plactic monoid of rank $n$ is left reversible if and only if it is right reversible.
\end{cor}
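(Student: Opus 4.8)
The plan is to deduce this corollary directly from the preceding Lemma, which asserts that any involution monoid is left reversible if and only if it is right reversible. Since $P_n$ has finite rank, everything reduces to confirming that the Sch\"utzenberger involution $\theta$ genuinely equips $P_n$ with the structure of an involution monoid, that is, that $\theta$ descends from a map on $A^*$ to a well-defined involutive anti-automorphism of the quotient $P_n \cong \langle A \mid K_n\rangle$. Once this is in place, the corollary is immediate on applying the Lemma with $M = P_n$.

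First I would record the easy properties of $\theta$ as a map on the free monoid $A^*$. It is an anti-automorphism, since reversing a word turns a concatenation $xy$ into $\theta(y)\theta(x)$ and the letterwise relabelling $k \mapsto n-k+1$ is a bijection of $A$; and it is self-inverse, since reversing twice and relabelling twice each return the original word. Both follow at once from the definition and are purely formal.

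The one genuinely non-trivial step, and the part I expect to be the main obstacle, is checking that $\theta$ is compatible with the Knuth relations $K_n$, so that it induces a well-defined map on $P_n$ rather than merely on $A^*$. For this I would verify that $\theta$ carries each defining relation to another defining relation. Writing $\bar{k} = n-k+1$ and applying $\theta$ to the first family member $xzy = zxy$ (valid when $x \le y < z$), one reverses and relabels to obtain $\bar{y}\,\bar{z}\,\bar{x} = \bar{y}\,\bar{x}\,\bar{z}$. Because $k \mapsto \bar{k}$ reverses the order on $A$, the hypothesis $x \le y < z$ becomes $\bar{x} \ge \bar{y} > \bar{z}$, and setting $y' = \bar{y}$, $x' = \bar{z}$, $z' = \bar{x}$ this is precisely the second family relation $y'x'z' = y'z'x'$ with $x' < y' \le z'$. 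A symmetric computation shows $\theta$ sends the second family back to the first, so $\theta$ interchanges the two families and respects $K_n$.

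With $\theta$ established as an involutive anti-automorphism of $P_n$, the corollary follows by quoting the Lemma. I would close by remarking that this argument is special to finite rank: there is no order-reversing bijection of $\mathbb{N}$, so $\theta$ does not extend to $P_\mathbb{N}$, and the two-sided reversibility of the infinite rank monoid will have to be handled by separate means.
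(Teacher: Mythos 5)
Your proposal is correct and follows the same route as the paper: the corollary is obtained by applying the lemma on involution monoids with $M = P_n$ equipped with the Sch\"utzenberger involution $\theta$. The only difference is one of detail --- the paper asserts (with a citation to Sch\"utzenberger) that $\theta$ descends to $P_n$, whereas you explicitly verify that $\theta$ interchanges the two families of Knuth relations, a computation that is correct and fills in what the paper leaves to the reader.
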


\section{Equations with equal content}

Henceforth we will endeavour to show that the plactic monoids are right reversible. We begin with a slightly modified version of Equation \ref{leftideal}, inspired by classical results about conjugate elements in the plactic monoids\footnote{Two elements of a monoid are (left) conjugate if they solve the equation $uX = Xv$ for some $X$. In the plactic monoids, two elements $u, v \in P_n$ are conjugate if and only if their contents match. See Theorem 17 in \cite{Conjugacy_result} for a proof of this.}. Given an arbitrary pair $u, v \in M$, consider the equation
\begin{align}\label{lefteqn}
    Xu = Xv.
\end{align}

In the plactic monoid, it is straightforward to see that the above equation only has solutions if $u$ and $v$ have equal content, since the Knuth relations do not change content. It is not immediately clear, however, that the converse is true. 

\begin{prop}\label{prop:lefteqnisgood}
    For any $n\in\mathbb{N}$, let $u, v \in P_n$ be elements with equal content. Then there is some $X \in P_n$ such that  $Xu = Xv$. 
\end{prop}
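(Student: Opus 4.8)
The plan is to produce, for the fixed content $\alpha = c(u) = c(v)$, a single multiplier $X \in P_n$ with the strong property that $Xw$ is \emph{the same} tableau for every word $w$ of content $\alpha$; specialising to $w = u$ and $w = v$ then gives $Xu = Xv$ at once. I would deliberately avoid building the witness by chaining elementary moves. Although $Xu = Xv$ just asks for $Xu$ and $Xv$ to be Knuth equivalent in $A^*$, and although two words of equal content are related by a sequence of adjacent transpositions, the relation ``$\exists X : Xu = Xv$'' cannot obviously be composed: if $Xu = Xv$ and $Yv = Yw$, then finding $Z$ with $Zu = Zw$ amounts to intersecting the two left ideals $\{Z : Zu = Zv\}$ and $\{Z : Zv = Zw\}$ (each is genuinely a left ideal, since $Zu=Zv$ forces $(WZ)u=(WZ)v$), and that intersection is an instance of the reversibility we are trying to establish. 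So transitivity is unavailable, and the conjugacy relation $uZ = Zv$ from the footnote, which moves $Z$ from one side to the other, does not directly deliver a common left factor either. The witness must therefore be constructed in one shot.

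To locate the right $X$, I would first analyse the two-letter case. For $a < b$ a direct Schensted computation shows that a surplus copy of the larger letter absorbs the swap: with $a=1,\ b=2$ one checks that $212 \cdot 12 = 212 \cdot 21$, both equal to \begin{ytableau} 2 & 2\\ 1 & 1 & 2 \end{ytableau}, because the extra $2$ sitting above the bottom row forces the incoming $1$ to bump a $2$ no matter when it is inserted. Generalising this, I expect $X$ to be a \emph{saturated} tableau, built from a staircase together with enough additional copies of each letter that \emph{every} letter of $w$, whenever it arrives, is guaranteed to trigger the same bumping cascade into the same cell. The target is to choose the padding so that $c(X)+\alpha$, together with the shape that is forced, pins down a unique semistandard tableau.

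The argument then splits into two claims. First, \emph{shape-independence}: $P(Xw)$ has the same shape for every ordering $w$ of $\alpha$. Second, \emph{uniqueness}: that shape together with the content $c(X)+\alpha$ determines the filling. The uniqueness step is automatic in rank $2$, where shape and content always determine a semistandard tableau, and in higher rank it is exactly what the saturation of $X$ must arrange, by forcing a shape (for instance a rectangle, or another rigid shape) admitting only one filling of the prescribed content.

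The hard part will be shape-independence. Reordering $w$ changes $P(w)$ itself, and naive multipliers do not repair this: inserting $12$ versus $21$ into the $2\times N$ rectangle produces tableaux of shapes $(N{+}2,\,N)$ and $(N{+}1,\,N{+}1)$, so a large rectangle on the left is useless however large $N$ is. The whole content of the proposition lies in exhibiting a multiplier whose abundance of large letters compels each insertion to run a full cascade to a predetermined cell, making the accumulated shape insensitive to insertion order. I would prove this by induction on the letters of $w$, maintaining the invariant that after each insertion the working tableau already coincides, on the cells contributed so far, with the tableau obtained by inserting the processed letters of $w$ \emph{in sorted order}; establishing and propagating that invariant through an arbitrary bumping cascade is where I expect essentially all the difficulty to reside.
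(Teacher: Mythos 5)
Your overall strategy is exactly the one the paper uses --- manufacture a single ``saturated'' $X$ for which $Xw$ depends only on $c(w)$, rather than chaining elementary swaps --- and your preliminary observations are sound: the $212\cdot 12 = 212\cdot 21$ computation is correct, and your point that the relation ``$\exists X:\ Xu = Xv$'' cannot be composed transitively (since intersecting the two witness sets is itself an instance of reversibility) is a genuine obstruction that justifies the one-shot construction. But as a proof this stops precisely where the work begins. You never exhibit $X$, and you defer the whole of shape-independence to an invariant (``the working tableau coincides, on the cells contributed so far, with sorted-order insertion'') that you explicitly flag as unestablished. That invariant \emph{is} the proposition; without a concrete $X$ for which a single insertion has an order-robust, predictable effect, the induction you sketch cannot even start. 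So there is a genuine gap: the missing idea is the right choice of columns.

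The paper closes this gap with one structural observation. Let $f_i \in \mathcal{C}_n$ be the full suffix column $n(n-1)\cdots i$, and take $X = f_1^{x_1}f_2^{x_2}\cdots f_n^{x_n}$ with $x_1 = 0$ and $x_{i+1} \geq c_i(u)$ for each $i < n$. The identity $f_{i+1}\, i = f_i$, together with Schensted's rule that an incoming letter bumps the \emph{leftmost} eligible entry, means that inserting a letter $i$ converts exactly one $f_{i+1}$ column into an $f_i$ column via a full cascade to a predetermined new cell --- your ``registers'' intuition made precise. Provided the capacities $x_{i+1}$ are never exhausted (which the bound $x_{i+1}\geq c_i(u)$ guarantees regardless of insertion order), one gets the closed formula $Xu = f_1^{x_1+c_1(u)}\cdots f_i^{\,x_i - c_{i-1}(u) + c_i(u)}\cdots f_n^{\,x_n - c_{n-1}(u)+c_n(u)}$, which depends only on $c(u)$; hence $Xu = Xv$ immediately. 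Note that this collapses both halves of your plan at once: shape-independence holds because the product is computed outright rather than compared across orderings, and your ``uniqueness'' step --- which, as you concede, is false in general for $n \geq 3$ without extra rigidity --- is never needed, since every column of $Xu$ is a forced suffix column $f_j$ and the exponent vector pins the tableau down. Your staircase-plus-padding guess was pointing at exactly this tableau; what was missing was identifying $f_{i+1}\,i = f_i$ as the mechanism that reduces the entire argument to bookkeeping of exponents.
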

\begin{proof}
    We will prove the proposition constructively. In analysing Equation \ref{lefteqn}, it will be helpful to consider $P_n$ with respect to its column generators $\mathcal{C}_n$. Specifically, define for each $i\in A$ the column $f_i\in\mathcal{C}_n$  corresponding to the decreasing sequence of all the letters $n$ down to $i$:

\ytableausetup{centertableaux}
\begin{align*}
    f_i = \begin{ytableau}
        n\\ \scriptstyle n-1 \\ \vdots \\ i
    \end{ytableau}
\end{align*}
\ytableausetup{aligntableaux = bottom}

and consider $X$ of the form $X = f_1^{x_1}f_2^{x_2}\dots f_n^{x_n}$. Note that each $X$ here is the column form of a tableau, hence in normal form. 

When we multiply $X$ on the right by some $i \in A$, we get that \begin{align*}Xi &= f_1^{x_1}\dots f_i^{x_i}f_{i+1}^{x_{i+1}}i\dots f_n^{x_n}\\ &= f_1^{x_1}\dots f_i^{x_i+1}f_{i+1}^{x_{i+1}-1}\dots f_n^{x_n}\end{align*}
because $f_{i+1}i = f_i$, and Schensted's algorithm insists that $i$ bump the leftmost letter in the row that it can. 

Now, consider $u\in P_n$ where the number of instances of $i$ in $u$ is less than $x_{i+1}$ for each $i \in A\setminus\{n\}$. That is, $c_i(u)\leq x_{i+1}$ for each $i\in A\setminus\{n\}$. Notice that, in the product $Xu$, the bottom row of $X$ has at most as many letters $i+1$ as $u$ has letters $i$, so in the Schensted insertion algorithm, each $i$ from $u$ will bump an $i+1$ from the bottom row of $X$, regardless of which order the letters are inserted. So we obtain a formula for the product in this case
$$Xu = f_1^{x_1+c_1(u)}\dots f_i^{x_i-c_{i-1}(u) + c_i(u)}\dots f_n^{x_n - c_{n-1}(u) + c_n(u)}.$$

This then means that all letters that appeared in $u$ will remain in the bottom row of $Xu$. But that means that, so long as both $c_i(u)\leq x_{i+1}$ and $c_i(v)\leq x_{i+1}$ for all $i\in A\setminus\{n\}$, we only get $Xu\neq Xv$ when $c(u) \neq c(v)$. 

Namely, for any $u, v\in P_n$ with $c(u) = c(v)$, we can take some $X = f_1^{x_1}\dots f_n^{x_n}$ with $x_1 = 0$ and $x_i \geq c_{i-1}(u)=c_{i-1}(v)$ for each $i\in \{2,\dots, n\}$. Then we will be exactly in the above case, and so we must have $Xu = Xv$. \end{proof}

\section{Intersections of principal ideals}

We now have all the pieces to show that

\begin{prop}\label{mainprop}
    In a plactic monoid of any finite rank $n$, two principal ideals always intersect. 
\end{prop}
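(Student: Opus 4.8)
The plan is to reduce the claim to the machinery already built. By Corollary \ref{cor:leftiffright}, the finite-rank plactic monoid is left reversible if and only if it is right reversible, so it suffices to show that any two principal left ideals intersect; that is, to solve Equation \ref{leftideal}, namely $X_2u = Y_2v$, for arbitrary $u, v \in P_n$. The key realisation is that Proposition \ref{prop:lefteqnisgood} already solves the harder-looking Equation \ref{lefteqn}, $Xu = Xv$, but only under the hypothesis that $u$ and $v$ have \emph{equal content}. So the whole task is to remove the equal-content restriction by prepending suitable words to $u$ and $v$ that equalise their contents, thereby turning an arbitrary instance of Equation \ref{leftideal} into an instance covered by the proposition.

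First I would fix arbitrary $u, v \in P_n$ and compare their contents $c(u), c(v) \in \mathbb{N}^n$. The idea is to choose words $p, q \in P_n$ such that $c(pu) = c(qv)$: concretely, for each letter $i \in A$, one wants $c_i(p) + c_i(u) = c_i(q) + c_i(v)$, which is easily arranged by letting $p$ supply the deficit of $u$ relative to the coordinatewise maximum of the two contents, and $q$ supply the deficit of $v$. Since contents are just nonnegative integer tuples and we may append any letters we like, such $p$ and $q$ plainly exist. Now set $u' = pu$ and $v' = qv$; these have equal content, so Proposition \ref{prop:lefteqnisgood} furnishes some $X \in P_n$ with $Xu' = Xv'$, i.e. $Xpu = Xqv$. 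Taking $X_2 = Xp$ and $Y_2 = Xq$ solves Equation \ref{leftideal}, so the two principal left ideals $P_nu$ and $P_nv$ intersect. Invoking Corollary \ref{cor:leftiffright} then gives right reversibility as well, and hence any two principal ideals intersect.

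I do not expect a serious obstacle here, since the content-equalisation step is elementary and the substantive content lives entirely in Proposition \ref{prop:lefteqnisgood}. The one point requiring a little care is purely bookkeeping: one must confirm that multiplying $u$ by $p$ on the left genuinely adds the contents of $p$ and $u$ coordinatewise, which holds precisely because the Knuth relations preserve content, so content is a monoid homomorphism $c : P_n \to \mathbb{N}^n$. Given that, the argument is a clean reduction. I would close by remarking that the same reduction, together with the observation that the Sch\"utzenberger involution and Proposition \ref{prop:lefteqnisgood} extend verbatim to the infinite-rank case (where any two elements involve only finitely many letters and hence lie in some $P_n$), yields the corresponding statement for $P_\mathbb{N}$.
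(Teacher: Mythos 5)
Your proposal is correct and takes essentially the same route as the paper's own proof: equalise the contents of $u$ and $v$ by left-multiplying with suitable elements, invoke Proposition \ref{prop:lefteqnisgood} to solve $Xpu = Xqv$, and transfer left reversibility to right reversibility via Corollary \ref{cor:leftiffright}. The only detail the paper includes that you pass over is the trivial mixed case --- a principal right ideal always meets a principal left ideal, since $uX = Yv$ is solved by $X = v$, $Y = u$ --- which is immediate.
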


\begin{proof}
    By Proposition \ref{prop:lefteqnisgood}, Equation \ref{lefteqn} has a solution if and only if $u$ and $v$ have the same content. But given any pair of elements $u, v \in P_n$, one can find $\alpha, \beta \in P_n$ such that $$c(\alpha u) = c(\beta v).$$ 

    So we can always find some $X$ such that $X\alpha u = X \beta v$, which solves Equation \ref{leftideal} for the pair $u, v \in P_n$. Thus every pair of principal left ideals intersect. Then by Corollary \ref{cor:leftiffright}, every pair of principal right ideals will also intersect. Finally, every principal right ideal trivially intersects every principal left ideal, since we can solve the equation $uX = Yv$ with the pair $X = v,\ Y = u$.  
\end{proof}

Namely, Proposition \ref{mainprop} implies that the plactic monoids of finite rank are both left and right reversible.

We can further show the following nice corollary.
\begin{cor}
    The infinite rank plactic monoid $P_\mathbb{N}$ is both left and right reversible.
\end{cor}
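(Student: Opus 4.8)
The plan is to reduce the infinite-rank statement to the finite-rank Proposition \ref{mainprop}, using the fact that any single element of $P_\mathbb{N}$ involves only finitely many letters. Given an arbitrary pair $u, v \in P_\mathbb{N}$, I would first let $n$ be the largest letter occurring in the normal forms of $u$ and $v$. Both elements then lie in the submonoid $S$ of $P_\mathbb{N}$ generated by the finite alphabet $\{1,\dots,n\}$.

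The crucial step is to identify $S$ with $P_n$ --- equivalently, to check that the homomorphism $P_n \to P_\mathbb{N}$ induced by the alphabet inclusion $\{1,\dots,n\}\hookrightarrow\mathbb{N}$ is injective. This rests on a \emph{locality} property of the Knuth relations: any application of a relation from $\bigcup_{m} K_m$ to a word over $\{1,\dots,n\}$ involves only letters that already appear, hence only letters $\leq n$, and is therefore already a relation in $K_n$. Since Knuth relations preserve content, the $\bigcup_m K_m$-class of a word over $\{1,\dots,n\}$ contains only words over $\{1,\dots,n\}$, so this class coincides with its $K_n$-class. This gives the desired embedding, and I expect verifying it to be the main (though essentially routine) obstacle.

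With the embedding established, the rest is immediate. Applying Proposition \ref{mainprop} inside $P_n$ yields elements solving both $X_2 u = Y_2 v$ and $u X_1 = v Y_1$; because the embedding is a monoid homomorphism, these equalities continue to hold in $P_\mathbb{N}$, so the corresponding principal ideals of $P_\mathbb{N}$ intersect. I would remark that one cannot instead shortcut via Corollary \ref{cor:leftiffright}, since $P_\mathbb{N}$ carries no Sch\"utzenberger involution (there is no largest letter to reflect against); fortunately the finite-rank proposition already supplies both the left and the right intersections directly, so the involution trick is not needed here.
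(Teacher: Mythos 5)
Your proposal is correct and follows essentially the same route as the paper: both reduce to the finite-rank case by observing that $u, v$ involve only finitely many letters and hence lie in $P_n \subset P_\mathbb{N}$, then invoke Proposition \ref{mainprop} for both the left and right intersections. The only difference is that you explicitly verify the injectivity of $P_n \to P_\mathbb{N}$ via locality of the Knuth relations (a detail the paper's proof takes for granted by writing $P_n \subset P_\mathbb{N}$), and your side remark that the Sch\"utzenberger involution is unavailable in infinite rank is accurate.
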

\begin{proof}
    Given a pair $u, v \in P_\mathbb{N}$, since both $u$ and $v$ are finite products of generators, there is some $n\in\mathbb{N}$ such that $u, v \in P_n \subset P_\mathbb{N}$. So, considering their principal right ideals in $P_n$, by Proposition \ref{mainprop}, $uP_n$ and $vP_n$ intersect. But since $P_n\subset P_\mathbb{N}$, it follows that $uP_n\subset uP_\mathbb{N}$ and $vP_n\subset vP_\mathbb{N}$. Thus $uP_\mathbb{N}$ and $vP_\mathbb{N}$ intersect. The argument for left ideals is analogous.
\end{proof}

\section*{Acknowledgements}
I thank Robert Gray for posing me this question, and Mikhail Volkov for his suggestions on how to restructure this note for a clearer presentation style and to remove some redundant information.

\end{document}